\numberwithin{equation}{section}
\title{Weakly Saturated Hypergraphs and a Conjecture of Tuza}
\author{Asaf Shapira \thanks{School of Mathematics, Tel Aviv University, Tel Aviv 69978, Israel. Email: asafico$@$tau.ac.il. Supported in part by ISF Grant 1028/16, ERC Consolidator Grant 863438 and NSF-BSF Grant 20196.} \and
Mykhaylo Tyomkyn
	\thanks{Department of Applied Mathematics, Charles University, Prague, Czech Republic. Email: tyomkyn$@$kam.mff.cuni.cz. Supported in part by GAČR grant 19-04113Y, ERC Synergy Grant DYNASNET 810115 and the H2020-MSCA-RISE Project CoSP-GA No. 823748.}}
\date{\today}
\theoremstyle{plain}
\newtheorem{theorem}{Theorem}[section]
\newtheorem{lemma}[theorem]{Lemma}
\newtheorem{observation}[theorem]{Observation}
\newtheorem{corollary}[theorem]{Corollary}
\def\moverlay{\mathpalette\mov@rlay}
\def\mov@rlay#1#2{\leavevmode\vtop{%
   \baselineskip\z@skip \lineskiplimit-\maxdimen
   \ialign{\hfil$\m@th#1##$\hfil\cr#2\crcr}}}
\newcommand{\charfusion}[3][\mathord]{
    #1{\ifx#1\mathop\vphantom{#2}\fi
        \mathpalette\mov@rlay{#2\cr#3}
      }
    \ifx#1\mathop\expandafter\displaylimits\fi}
\renewenvironment{proof}[1][\proofname]
{\par\pushQED{\qed}
	\normalfont\topsep6\p@\@plus6\p@\relax\trivlist
	\item[\hskip\labelsep\bfseries#1\@addpunct{.}]
	\ignorespaces}
{\popQED\endtrivlist\@endpefalse}
\newcommand{\F}{\mathcal F}
\newcommand{\shn}{\mbox{sat}(n,H)}
\newcommand{\whn}{\mbox{wsat}(n,H)}
\newcommand{\whnone}{\mbox{wsat}(n_1,H)}
\newcommand{\whm}{\mbox{wsat}(m,H)}
\newcommand{\whmone}{\mbox{wsat}(m_1,H)}
\newcommand{\eps}{\varepsilon}
\definecolor{RED}{rgb}{1,0,0}\definecolor{BLUE}{rgb}{0,0,1} 
\begin{document}
\date{}
\maketitle

\begin{abstract}

Given a fixed hypergraph $H$, let $\whn$ denote
the smallest number of edges in an $n$-vertex hypergraph $G$, with the property that
one can sequentially add the edges missing from $G$,
so that whenever an edge is added, a new copy of $H$ is created.
The study of $\whn$ was introduced by Bollob\'as in 1968,
and turned out to be one of the most influential topics in extremal combinatorics.
While for most $H$ very little is known regarding $\whn$,
Alon proved in 1985 that for every graph $H$ there is a limiting constant $C_H$ so that
$\whn=(C_H+o(1))n$. Tuza conjectured in 1992 that Alon’s theorem can be (appropriately) extended to arbitrary $r$-uniform hypergraphs. In this paper we prove this conjecture.

\end{abstract}

\section{Introduction}\label{sec:intro}

Typical problems in extremal combinatorics ask how large or small a discrete structure can be, assuming it possesses certain properties.
For example, the Tur\'an problem asks, for a fixed $r$-uniform hypergraph ($r$-graph for short) $H$, to determine the smallest integer
$m=\mbox{ex}(n,H)$ so that every $n$-vertex $r$-graph with $m+1$ edges has a copy of $H$.
Another example is the Ramsey problem which asks to find the minimum
integer $R=R(n)$ so that every $2$-coloring of the edges of the complete graph on $R$ vertices has a monochromatic clique of size $n$.
While in many cases it seems hopeless to obtain full solutions to these problems, one would at least like to know that
these extremal functions are ``well behaved''. For example, it is natural to ask if the quantities $\mbox{ex}(n,H)/n^r$ and $R(n)^{1/n}$ tend to a limit. While it is easy to see that the first quantity indeed tends to a limit~\cite{KNS64}, it is a famous open problem of Erd\H{o}s~\cite{erdos1981problems, erdos1981some, erdHos1981combinatorial} to prove that the second one does so as well. Our aim in this paper is to prove that another well studied extremal function is well behaved.

For a set of vertices $V$ we use $\binom{V}{r}$ to denote the complete $r$-graph on $V$.
For a fixed $r$-graph $H$, an $r$-graph $G=(V,E)$ is called \emph{$H$-saturated} if it does not contain a copy of $H$ but for any edge $e \in \binom{V}{r} \setminus E(G)$
adding $e$ to $G$ creates a copy of $H$. We let $\shn$ denote the smallest number of edges in an $H$-saturated
$r$-graph on $n$ vertices. Let $K^{r}_t$ denote the complete $r$-graph on $t$ vertices; when $r=2$ (i.e. when dealing with graphs) we use
$K_t$ instead of $K^{2}_t$. The problem of determining $\mbox{sat}(n,{K_t})$
was raised by Zykov~\cite{Zy49} in the 1940’s and studied in the 1960's by Erd\H{o}s, Hajnal and Moon~\cite{EHM64} who
showed that $\mbox{sat}(n,K_t)=\binom{n}{2}-\binom{n-t+2}{2}$. Their result was later generalized by Bollob\'as~\cite{BB65} who showed that $\mbox{sat}(n,K^{r}_t)=\binom{n}{r}-\binom{n-t+r}{r}$. It is worth noting that the proof in~\cite{BB65} introduced the (equivalent and) highly influential \emph{Two Families Theorem}, stating that if $A_1,\ldots,A_s$ and $B_1,\ldots,B_s$ are two families of sets, so that all $|A_i|=a$, all $|B_i|=b$, and $A_i \cap B_j =\emptyset$ if and only if $i= j$, then $s \leq \binom{a+b}{a}$.

We say that $G$ is \emph{weakly $H$-saturated}
if the edges of $\binom{V}{r} \setminus E(G)$ admit an ordering $e_1,\dots,e_k$ such that for each $i=1,\dots,k$ the $r$-graph $G_i:=G\cup \{e_1,\dots,e_i\}$ contains a copy of $H$ containing the edge $e_i$. We refer to the sequence $e_1,\dots,e_k$ as a \emph{saturation process}.
Define $\whn$ to be the smallest number of edges in a weakly $H$-saturated
$r$-graph on $n$ vertices. Note that we may automatically assume that any $G$ realizing $\whn$ is $H$-free, as otherwise we could remove an edge from a copy of $H$ in $G$ to obtain a smaller weakly $H$-saturated $r$-graph. Hence weak saturation can be viewed as an extension of the notion of (ordinary) saturation.

The problem of determining $\whn$ was first introduced in 1968 by Bollob\'as~\cite{BB68} who conjectured
that $\mbox{wsat}(n,{K_t})=\mbox{sat}(n,{K_t})$. This was proved independently by Frankl~\cite{Frankl82} and Kalai~\cite{Kal84, Kal85} using the skewed\footnote{In the skewed version one assumes that $A_i \cap B_i = \emptyset$ as in Bollob\'as's theorem, but that $A_i \cap B_j \neq  \emptyset$ only for $i < j$.} variant of Bollob\'as's Two Families Theorem (a related statement for matroids was proven earlier by Lov\'asz~\cite{Lovasz77}) and further extended by Alon~\cite{Alon} and Blokhuis~\cite{blokhuis1990solution}. This result, which has several other equivalent formulations, is amongst the most classical and important results of extremal combinatorics. See e.g. the discussions in \cite{BFbook88, ov18, pin1983two, scottwilmer19}.

While the aforementioned results determine the exact value of $\whn$ when $H=K^{r}_t$, our understanding
of this function for general $H$ is much more limited, despite decades of extensive study~\cite{Alon, BBMR, Balogh98, btt21,
Erdos91, FaudreeGould14,  Morrison18, MoshkShap, Pikhurko01a,  Pikhurko01b, Semanivsin97, Sidorowicz07, Tuza88, Tuza92}. Note that by the construction from~\cite{EHM64}, we know that every graph $H$ we have
\begin{equation}\label{eqSimple}
\whn \leq \shn \leq \mbox{sat}(n,{K_{|V(H)|}})=O_H(n).
\end{equation}
As of now, the best known general bounds for $\whn$ when $H$ is a graph are due to Faudree, Gould and Jacobson~\cite{Faudree13} who showed that for graphs $H$ of minimum degree $\delta=\delta(H)$ we have\footnote{The upper bound is known to be tight for many graphs, the cliques being one example. Concerning the lower bound, the authors of~\cite{Faudree13} give a construction of a graph $H$ with $\whn \leq (\delta/2+1/2-1/\delta)n$.}
$$
\left(\frac{\delta}{2}-\frac{1}{\delta+1}\right)\cdot n \leq \whn \leq (\delta-1)\cdot n+O(1).
$$
\noindent
At this point it is natural to ask if for every $H$ there is a constant $C_H$ so
that
\begin{equation}\label{eqNoga}
\whn =(C_H+o(1))n.
\end{equation}
Such a result was obtained in 1985 by Alon~\cite{Alon}, who proved
that for graphs the function $\whn$ is (essentially) subadditive, implying that $\whn/n$ tends to a limit, by Fekete's subadditivity lemma \cite{fekete}.

Much less was known when $H$ is an $r$-graph with $r \geq 3$.
Similarly to the case $r=2$ above (\ref{eqSimple}), Bollob\'as's construction from~\cite{BB65} gives a simple bound of
$$
\whn\leq \shn =O_H(n^{r-1}).
$$
A more refined result was obtained by Tuza~\cite{Tuza92} who introduced
the following key definition. The {\em sparseness} of an $r$-graph $H$, denoted $s(H)$, is the smallest size of a vertex set $W\subseteq V$ contained in precisely one edge of $H$; note that $1\leq s(H)\leq r$ for every non-empty $r$-graph $H$.
It was proved in \cite{Tuza92} that for every $r$-graph $H$ there are two positive reals $c_H$ and $C_H$ such that
\begin{equation}\label{eqTuza}
c_H\cdot n^{s-1} \leq \whn \leq C_H \cdot n^{s-1}.
\end{equation}
It was further conjectured in \cite{Tuza92} that
the more refined bound
$\whn=C_H\cdot n^{s-1} +O(n^{s-2})$ holds for every $r$-graph of sparseness $s$. See also the recent survey \cite{2021survey} on saturation problems where this conjecture is further discussed.
Since such a result is not known even for graphs (i.e.~when $r=s=2$), Tuza~\cite{Tuza92} asked if one can
improve upon (\ref{eqTuza}) by showing that for every $r$-graph we have $\whn=C_H\cdot n^{s-1} +o(n^{s-1})$ where $s=s(H)$.
Prior to this work, such a result was only known for $r=2$ by Alon's result (\ref{eqNoga}).
In this paper we fully resolve Tuza's problem for all $r$-graphs.
\begin{theorem}\label{thm:main}
	For every $r$-graph $H$ there is $C_H>0$ such that
	$$
	\lim_{n\rightarrow \infty}{\whn}/{n^{s-1}}=C_H,
	$$
where $s=s(H)$ is the sparseness of $H$. In particular\footnote{Here we simply use the fact that for every $r$-graph $H$ we have $1 \leq s(H)\leq r$.}, for every $r$-graph $H$ there is $C'_H\geq 0$ such that
	$$	
\lim_{n\rightarrow \infty}{\whn}/{n^{r-1}}=C'_H.
	$$
\end{theorem}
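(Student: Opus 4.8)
The plan is to deduce the theorem from a Fekete-type argument. Put $a_n:=\whn/\binom{n}{s-1}$, which is asymptotic to $(s-1)!\,\whn/n^{s-1}$; by Tuza's bounds \eqref{eqTuza} the sequence $(a_n)$ is bounded and bounded away from $0$, so it suffices to prove that $\lim_n a_n$ exists, and for that it is enough to establish, for every fixed $m$, the one-sided inequality
\begin{equation}\label{eq:plan-key}
\limsup_{n\to\infty} a_n\ \le\ a_m .
\end{equation}
Indeed, \eqref{eq:plan-key} forces $\limsup_n a_n\le\inf_m a_m\le\liminf_n a_n$, so $\lim_n a_n$ exists and lies in $(0,\infty)$, and the first limit of Theorem~\ref{thm:main} follows after dividing by the constant $(s-1)!$; the ``in particular'' clause is then immediate, since $n^{s-1}/n^{r-1}\to 1$ when $s=r$ and $\to 0$ when $s<r$. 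It will in fact be enough to prove \eqref{eq:plan-key} with an additive (or multiplicative) $o_m(1)$ error on the right-hand side, a slack that matters because the construction below loses a little when the optimal $G_m$ is not perfectly structured; likewise $\binom{n}{s-1}$ may be replaced by any quantity asymptotic to $n^{s-1}/(s-1)!$.

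To prove \eqref{eq:plan-key} we convert a near-optimal weakly $H$-saturated $r$-graph $G_m$ on $[m]$ into a weakly $H$-saturated $r$-graph $G_n$ on $[n]$ with at most $(1+o_m(1))\,a_m\binom{n}{s-1}$ edges. The lever is an elementary consequence of sparseness: fix an $s$-set $W\subseteq V(H)$ lying in a unique edge $e^*$ of $H$; then every edge $e\in E(H)\setminus\{e^*\}$ satisfies $|e\cap W|\le s-1$, since $W\subseteq e$ would put $W$ in two edges. Hence, in any host $r$-graph, once one installs a bounded \emph{template} --- a constant-size configuration $P$ together with all $r$-sets having at least $r-s+1$ of their vertices in $P$ --- a non-edge $f$ with exactly $r-s$ vertices in $P$ can be \emph{activated}: let $f$ play the role of $e^*$, embed $V(H)\setminus W$ (which contains $e^*\setminus W$) into $P$, and map $W$ onto the $s$ remaining vertices of $f$, which are unconstrained because $W$ lies in no other edge; every remaining edge of $H$ then lands on at least $r-s+1$ vertices of $P$ and is already present, so adding $f$ completes a new copy of $H$ through $f$. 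This is precisely the mechanism behind the $\whn=O(n^{s-1})$ half of \eqref{eqTuza}, and $|P|=O_H(1)$ caps the edge count at $O(n^{s-1})$.

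The construction of $G_n$ proceeds in two layers. First, at the cost of only a factor $1+\eps$ (harmless for \eqref{eq:plan-key}), one shows that $G_m$ may be taken \emph{structured}: a core $C\subseteq[m]$ of size $O_H(1)$ together with, for each of a bounded list of patterns on $C$, all $r$-sets matching that pattern on $C$ and otherwise arbitrary --- up to $O_m(1)$ further ``correction'' edges --- so that $G_m$ is essentially a bounded union of \emph{generalised stars} (a generalised star being the family of all $r$-sets containing some fixed $(r-s+1)$-element set, which alone has $\Theta(n^{s-1})$ edges) and has $(1+o_m(1))\,a_m\binom{m}{s-1}$ edges. Second, one defines $G_n$ on $[n]$ by the \emph{same} recipe relative to $C\subseteq[m]\subseteq[n]$, now letting the patterns range over all of $[n]$; then $|E(G_n)|=(1+o(1))\,a_m\binom{n}{s-1}$, and the restriction of $G_n$ to any $m$-set containing $C$ is a copy of $G_m$ and hence weakly $H$-saturated, so each such $m$-set can be completed to a clique. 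With a large clique available it plays the role of the template $P$, and one activates the remaining non-edges one at a time, in order of decreasing overlap with the already-complete part, each activation completing a new copy of $H$ by the mechanism above; iterating reaches $\binom{[n]}{r}$. Hence $G_n$ is weakly $H$-saturated, which gives \eqref{eq:plan-key}.

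The main obstacle lies in the first layer and its compatibility with the last: one must prove that near-optimal weakly $H$-saturated $r$-graphs on $m$ vertices can be brought into the rigid ``core plus generalised stars'' form that transplants faithfully to $[n]$ without inflating the edge count beyond the permitted $o(n^{s-1})$, and one must check that the saturation-propagation step --- completing the cliques and then growing them vertex by vertex and edge by edge --- goes through for an arbitrary target $H$, the classical clique case corresponding to $W$ being itself an edge. The sparseness parameter $s$ governs every quantitative feature here: the dimension $s-1$ of the family over which $G_n$ ranges, the effective size of the activating template, and the order in which edges must be added.
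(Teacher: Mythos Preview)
Your Fekete-style framing is sound and, in spirit, matches what the paper does: both arguments ultimately aim at an inequality of the shape $\limsup_n \whn/n^{s-1}\le \whm/m^{s-1}+o_m(1)$ by transplanting a good $G_m$ into $[n]$. The genuine gap is your ``first layer''. You assert that, up to a $(1+\eps)$ factor, a near-optimal $G_m$ may be replaced by a \emph{structured} one --- a bounded core $C$ plus a bounded union of generalised stars --- which then transplants verbatim to $[n]$. You yourself flag this as the ``main obstacle'', but you give no mechanism for producing such a structured $G_m$, and there is no reason to expect one to exist: we have essentially no structural understanding of optimal weakly $H$-saturated $r$-graphs for general $H$, even when $r=2$. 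Declaring that the optimum can be realised in a self-similar, scale-invariant form is at least as hard as the theorem you are trying to prove; the authors explicitly note that attempts to extract recursive relations for $\whn$ and feed them into Fekete-type arguments ``seem to lead to a dead-end''.

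The paper sidesteps the structure problem entirely. It treats $G_m$ as a black box and, instead of transplanting a single pattern, places many overlapping copies of $G_m$ on $m$-subsets of $[n]$ chosen via R\"odl's approximate-design theorem: partition $[n]$ into $\ell$ clusters of size $m^{1/(s-1)}$, take a near-optimal cover of the $(s-1)$-tuples of clusters by blocks of $m^{1-1/(s-1)}$ clusters each, and put a copy of $G_m$ on each block. The near-design property is precisely what keeps the total edge count at $(1+o(1))\,\whm\cdot n^{s-1}/m^{s-1}$ --- the efficiency that your structure claim was meant to supply. After the copies of $G_m$ saturate all edges meeting at most $s-1$ clusters, a separate family of gadget edges of size $O(n^{s-1}/m^{1/(s-1)})=o(n^{s-1})$ (Lemmas~\ref{lem:spartite} and~\ref{lem:percolate}) template-saturates the rest. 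Your activation mechanism (edges with at least $r-s+1$ vertices in a fixed template) is essentially the device of Lemmas~\ref{lem:template} and~\ref{lem:trhslarge}, but that device alone only recovers the crude $O(n^{s-1})$ upper bound; it does not provide the efficient covering, which is where the real work lies.
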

It is interesting to note that Tuza~\cite{Tuza86} (for graphs) and Pikhurko~\cite{pikhurko_1999} (for arbitrary $r$-graphs) also conjectured that a theorem analogous to the second assertion of Theorem \ref{thm:main} should hold with respect to $\shn$. However, there are results suggesting that this analogous statement does not hold even for graphs,
see~\cite{behague2018, CL20, Pikhurko04} and the discussion in \cite{2021survey}.

\paragraph{Proof and paper overview:}

It is natural to ask why Alon's~\cite{Alon} one-paragraph proof of Theorem \ref{thm:main} for $s=2$ is hard
to extend to $s > 2$.\footnote{While formally~\cite{Alon} only deals with $r=2$, the proof very similarly applies to $s=2$ for arbitrary $r$.} Perhaps the simplest reason is that one cannot hope to show that in these cases the
function $\whn$ is subadditive (and then apply Fekete's lemma) since a subadditive function is necessarily
of order $O(n)$, while we know from (\ref{eqTuza}) that when $s \geq 3$ the function $\whn$ is of order at least $n^2$.
One can of course try to come up with more complicated recursive relations for $\whn$ and combine them with variants of Fekete's lemma,
but this seems to lead to a dead-end (we have certainly tried to go down that road).
The main novelty in this paper is in finding a direct and efficient way to use an $m$-vertex $r$-graph witnessing the fact that
$\whm$ is small, in order to build arbitrarily large $n$-vertex $r$-graphs witnessing the fact that $\whn$ is small.
One of the main tools we use to construct such an example is R\"odl's approximate designs theorem \cite{Rodl} which enables
us to efficiently combine many examples of size $m$ into one of size $n$. R\"odl's result
would only allow us to construct a saturation process generating part of the edges of $K_n^r$.
To complete this saturation process we would also need another set of gadgets.
In Section~\ref{sec:prelim} we establish some general facts about weak saturation of $r$-graphs. The main proof of Theorem~\ref{thm:main} is carried out in Section~\ref{sec:proof}.

\section{Preliminaries}\label{sec:prelim}

In this section we establish a few useful facts regarding $\whn$.
Perhaps counterintuitively, a graph $G$ can be weakly $H$-saturated but not weakly $H'$-saturated for some subgraph $H'\subseteq H$.
In fact, $\whn$ is not even monotone with respect to $H$.
For example, if $H'$ is a triangle and $H$ is a triangle with a pendant edge, then $\mbox{wsat}(n,{H'})=n-1$ (with extremal examples being all $n$-vertex trees), while $\whn=3$ (the triangle being one extremal example). We now define a setting where one does have such a monotonicity.

Given $s\leq r\leq h$, let $T^-_{r,h,s}$ be the $r$-graph obtained from the complete $h$-vertex $r$-graph $K^{r}_h$ by choosing a set $Z$ of $s$ vertices and deleting all edges containing $Z$ as a subset.
Define the \emph{template $r$-graph} $T_{r,h,s}$ to be the (unique up to isomorphism) $r$-graph obtained from $T^-_{r,h,s}$ by adding a single missing edge $f$ (on the same vertex set), we call $f$ the \emph{special edge}. To practise the definition, note that $T_{r,h,r}$ is simply the clique $K_h^r$. We say that an $r$-graph $G$ is \emph{$T_{r,h,s}$-template saturated} if the edges in $\binom{V(G)}{r}\setminus E(G)$ admit an ordering  $e_1,\dots, e_k$ (the \emph{$T_{r,h,s}$-template saturation process}) such that for each $i=1,\dots,k$ the $r$-graph $G_i:=G\cup \{e_1,\dots,e_i\}$ contains a copy of $T_{r,h,s}$ in which the edge $e_i$ plays the role of the special edge $f$.
The next lemma shows that comparing $T_{r,h,s}$-template saturation with weak $H$-saturation, for an $r$-graph $H$ with $s(H)=s$, we do have monotonicity.

\begin{lemma}\label{lem:template}
Suppose $G$ and $H$ are $r$-graphs with $|V(H)|=h$ and $s(H)=s\geq 2$. Suppose that $G$ is $T_{r,h,s}$-template saturated. Then $G$ is weakly $H$-saturated.
\end{lemma}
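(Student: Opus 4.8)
The plan is to reduce the lemma to a single embedding observation: the template $T_{r,h,s}$ was designed precisely so that $H$ sits inside it with the unique edge of $H$ through a minimum‑sparseness set lying on the special edge. Since $s(H)=s$, fix a set $W\subseteq V(H)$ with $|W|=s$ that is contained in precisely one edge of $H$, and call that edge $f_H$. Let $Z$ and $f$ denote, as in the definition of $T_{r,h,s}$, the chosen $s$-set and the special edge, so that $Z\subseteq f$ and, by construction, $f$ is the \emph{only} edge of $T_{r,h,s}$ containing $Z$. As $|V(H)|=h=|V(T_{r,h,s})|$, I will exhibit a bijection $\phi\colon V(H)\to V(T_{r,h,s})$ sending $W$ onto $Z$ and $f_H\setminus W$ onto $f\setminus Z$ (so that $\phi(f_H)=f$), and defined arbitrarily on the remaining $h-r$ vertices.

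I claim $\phi$ is an embedding of $H$ into $T_{r,h,s}$ that carries $f_H$ to the special edge. An $r$-subset $e'$ of $V(T_{r,h,s})$ fails to be an edge of $T_{r,h,s}$ exactly when $Z\subseteq e'$ and $e'\neq f$. So it suffices to check that no edge of $H$ maps to such a set. For $e=f_H$ this holds since $\phi(f_H)=f$. For every other edge $e\in E(H)$, the defining property of $W$ forces $W\not\subseteq e$, whence $Z=\phi(W)\not\subseteq\phi(e)$, so $\phi(e)$ is an edge of $T_{r,h,s}$. Thus $\phi(H)\subseteq T_{r,h,s}$ is a copy of $H$ in which the special edge plays the role of $f_H$.

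The lemma now follows immediately. Let $e_1,\dots,e_k$ be a $T_{r,h,s}$-template saturation process for $G$. Fix $i$; by definition $G_i=G\cup\{e_1,\dots,e_i\}$ contains a copy of $T_{r,h,s}$ in which $e_i$ plays the role of the special edge $f$. Composing this copy with $\phi$ yields a copy of $H$ inside $G_i$ — every edge of it lies in $G_i$ because it lies in the copy of $T_{r,h,s}$ — in which $e_i$ plays the role of $f_H$; in particular this copy of $H$ contains the edge $e_i$. Hence the very same ordering $e_1,\dots,e_k$ of $\binom{V(G)}{r}\setminus E(G)$ witnesses that $G$ is weakly $H$-saturated.

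There is no real obstacle here beyond getting the direction of the reduction right: template saturation is the stronger notion, since once a copy of $T_{r,h,s}$ appears one obtains a copy of $H$ inside it for free, and the only nontrivial point — that $T_{r,h,s}$ is the correct ``host'' supergraph for an arbitrary $H$ of sparseness $s$, with the special edge matched to the unique edge of $H$ through $W$ — is exactly what the definition of the template, together with the hypothesis $s(H)=s$, provides.
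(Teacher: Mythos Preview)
Your proof is correct and follows essentially the same approach as the paper's: embed $H$ into $T_{r,h,s}$ so that the unique edge of $H$ through a sparseness-witnessing set lands on the special edge, and then compose with the template copies in the saturation process. You are in fact slightly more careful than the paper, explicitly requiring $\phi(f_H\setminus W)=f\setminus Z$ so that $\phi(f_H)=f$; the paper only states $\phi(S)=Z$ and leaves this (necessary) refinement implicit when asserting that ``the new edge plays the role of $e$.''
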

\begin{proof}
By the definition of sparseness, $H$ contains a set $S$ of $s$ vertices contained in precisely one edge $e\in E(H)$. Deleting $e$ from $H$ gives the $r$-graph $H^-$ of order $h$ and in which no edge contains $S$ as a subset. By the definition of $T_{r,h,s}$ we have that $H^-$ is a subgraph of $T^-_{r,h,s}$. More importantly, $H^-$ can be embedded into $T^-_{r,h,s}$ in a way that maps $S$ bijectively on $Z$. 
Indeed, any map $\phi:V(H^-)\mapsto V(T^-_{r,h,s})$ which sends the set $S$ of $H^-$ to the set $Z$ of $T^-_{r,h,s}$ has this property.

Consider now a $T_{r,h,s}$-template saturation process of $G$. By the above argument, at every step the newly created copy of $T_{r,h,s}$ (with the new edge playing the role of the special edge) gives rise to a new copy of $H$, where the new edge plays the role of $e$. Therefore, the same process certifies weak $H$-saturation of $G$.
\end{proof}
\noindent

We will frequently use the following simple observation stating that saturation processes are monotone with respect to the starting graph $G$.

\begin{observation}\label{obs:mono}
	For any $r$-graphs $G$ and $H$ with $|V(G)|=n$, if $G$ is weakly $H$-saturated then so is any intermediate $r$-graph $G\subseteq G'\subseteq K_n^r$. The analogous statement holds for $T_{r,h,s}$-template saturation.
\end{observation}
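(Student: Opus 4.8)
The plan is to take a saturation process for $G$ and restrict it to the edges that are missing from $G'$. Concretely, let $e_1,\dots,e_k$ be an ordering of $\binom{V}{r}\setminus E(G)$ witnessing that $G$ is weakly $H$-saturated, so that each $G_i=G\cup\{e_1,\dots,e_i\}$ contains a copy of $H$ through $e_i$. Since $G\subseteq G'$, the set $\binom{V}{r}\setminus E(G')$ is a subset of $\binom{V}{r}\setminus E(G)$; let $e_{i_1},\dots,e_{i_\ell}$ with $i_1<\dots<i_\ell$ be the subsequence of $e_1,\dots,e_k$ consisting of exactly the edges missing from $G'$. The claim is that this subsequence is a valid saturation process for $G'$.

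The key point is the containment $G'\cup\{e_{i_1},\dots,e_{i_j}\}\supseteq G_{i_j}$ for every $j$. Indeed, every edge $e_m$ with $m\le i_j$ lies either in $E(G')$ (when $e_m\notin\binom{V}{r}\setminus E(G')$) or is one of $e_{i_1},\dots,e_{i_j}$; in both cases it belongs to the left-hand side. Hence the $r$-graph obtained after adding the first $j$ edges of the restricted process contains $G_{i_j}$ on the same vertex set, and therefore contains the copy of $H$ through $e_{i_j}$ guaranteed by the original process. This shows that $G'$ is weakly $H$-saturated.

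For the $T_{r,h,s}$-template version the identical argument applies verbatim: a copy of $T_{r,h,s}$ in which $e_{i_j}$ plays the role of the special edge and which lives inside $G_{i_j}$ is also present in any supergraph of $G_{i_j}$ on the same vertex set, in particular inside $G'\cup\{e_{i_1},\dots,e_{i_j}\}$, still with $e_{i_j}$ playing the role of the special edge.

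There is essentially no obstacle here; the only step requiring a line of care is the set-theoretic containment $G'\cup\{e_{i_1},\dots,e_{i_j}\}\supseteq G_{i_j}$, after which the monotonicity of ``containing a copy of $H$ (resp.\ of $T_{r,h,s}$) through a prescribed edge'' under passing to supergraphs on the same vertex set does all the work.
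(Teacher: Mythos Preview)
Your argument is correct and is exactly the natural one: restrict the saturation process of $G$ to the subsequence of edges missing from $G'$, and use the containment $G'\cup\{e_{i_1},\dots,e_{i_j}\}\supseteq G_{i_j}$ to transfer the required copy of $H$ (respectively, of $T_{r,h,s}$ with $e_{i_j}$ as special edge). The paper itself states this as an observation without proof, so there is nothing to compare against; your write-up supplies precisely the details one would fill in.
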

\noindent
As an immediate consequence we obtain

\begin{lemma}\label{lem:trhslarge}
	Suppose $s'$ satisfies $r\geq s'\geq s\geq 2$, and let $G$ be a supergraph of $T^-_{r,h,s'}$ on the same vertex set. Then $G$ is $T_{r,h,s}$-template saturated in $K^{r}_{h}$.
\end{lemma}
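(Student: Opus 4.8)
The plan is to reduce, using the monotonicity in Observation \ref{obs:mono}, to the single smallest instance --- showing that $T^-_{r,h,s}$ is $T_{r,h,s}$-template saturated in $K^{r}_h$ --- and then to observe that for $T^-_{r,h,s}$ \emph{any} ordering of the missing edges is a valid template saturation process.

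First I would align the two deleted vertex sets. Let $Z'$ be the $s'$-set whose $r$-supersets are removed from $K^{r}_h$ to form $T^-_{r,h,s'}$, and fix any $s$-subset $Z\subseteq Z'$, which exists since $s\le s'$. Every $r$-set containing $Z'$ also contains $Z$, so deleting the $r$-supersets of $Z$ removes at least as many edges as deleting those of $Z'$; hence the copy of $T^-_{r,h,s}$ built on the deleted set $Z$ satisfies $T^-_{r,h,s}\subseteq T^-_{r,h,s'}\subseteq G$, all on the same $h$-vertex set $V$. By Observation \ref{obs:mono} it therefore suffices to show that $T^-_{r,h,s}$ itself is $T_{r,h,s}$-template saturated in $K^{r}_h$. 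The missing edges of $T^-_{r,h,s}$ are precisely the $r$-subsets of $V$ that contain $Z$; I claim that listing them in \emph{any} order $e_1,\dots,e_k$ yields a valid process. Indeed, fix $i$ and consider the subhypergraph of $G_i:=T^-_{r,h,s}\cup\{e_1,\dots,e_i\}$ consisting of all $r$-subsets of $V$ not containing $Z$, together with the single edge $e_i$. The former edges all lie in $T^-_{r,h,s}$ by definition and $e_i\in G_i$, so this subhypergraph sits inside $G_i$; and it is isomorphic to $T_{r,h,s}$ with $Z$ in the role of the deleted $s$-set and $e_i$ in the role of the special edge $f$, via any bijection $V(T_{r,h,s})\to V$ sending the distinguished $s$-set onto $Z$ and $f$ onto $e_i$ (such a bijection exists because $Z\subseteq e_i$, $|Z|=s$ and $|e_i|=r$). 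Thus $G_i$ contains the required copy of $T_{r,h,s}$, as needed.

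I do not expect a real obstacle here --- this is why the excerpt presents the statement as an immediate consequence of Observation \ref{obs:mono}. The one point deserving a line of care is confirming that the subhypergraph exhibited above is a bona fide isomorphic copy of $T_{r,h,s}$, with $e_i$ playing the role of the special edge, rather than merely some subgraph of such a copy: one checks that under the chosen bijection an $r$-set of $V$ contains $Z$ if and only if its preimage contains the distinguished $s$-set, which forces the images of the remaining edges of $T_{r,h,s}$ to be exactly the $r$-subsets of $V$ avoiding $Z$, matching our subhypergraph on the nose.
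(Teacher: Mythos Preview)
Your proof is correct and follows essentially the same approach as the paper: reduce via Observation~\ref{obs:mono} and the containment $T^-_{r,h,s}\subseteq T^-_{r,h,s'}$ to the base case $G=T^-_{r,h,s}$, then note that any ordering of the missing edges works because adding each $e_i\supseteq Z$ yields a copy of $T_{r,h,s}$ with $e_i$ as the special edge. Your write-up simply spells out in detail what the paper's one-line proof leaves implicit.
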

\begin{proof}
The assertion is true for $G=T_{r,h,s}$: the missing edges can be added in any order. For arbitrary $s'\geq s$, the $r$-graph $T^-_{r,h,s'}$ and, by extension, every supergraph thereof, contain $T^-_{r,h,s}$ as a subgraph. Therefore, the assertion holds by Observation~\ref{obs:mono}.
\end{proof}

Our next goal is to obtain a certain ``approximate continuity'' of $\whn$ with respect to $n$.
We first need the following lemma.
\begin{lemma}\label{lem:annoying}
Let $h\geq r\geq s\geq 2$, suppose $V=A\sqcup B$ is a set of vertices, where $|B| \leq |A|$, and let $E=\binom{A}{r}$ be the edges contained in $A$.
Then there exists a set $E'\subseteq \binom{V}{r}$ of size at most $rh^r|A|^{s-2}|B|$ so that $G=(V,E\cup E')$ is $T_{r,h,s}$-template saturated in $\binom{V}{r}$.	
\end{lemma}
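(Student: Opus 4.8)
The plan is to write down $E'$ explicitly, together with one ordering of the edges outside $E\cup E'$ that serves as a $T_{r,h,s}$-template saturation process; there will be no recursion on the lemma itself. (One is tempted to peel off the vertices of $B$ one at a time and treat the link of each new vertex as a smaller instance of the lemma, but the instance one then has to solve forces a complete $(r-1)$-uniform clique on a constant fraction of $A$ inside its ``seed'', already costing $\Theta(|A|^{r-1})$ --- far above the budget $O(|A|^{s-2}|B|)$ when $s<r$. The remedy is to avoid any such clique and to organise everything around a constant-size anchor.) The hypotheses $h\ge r\ge s\ge 2$ are used mainly to guarantee $h-s\ge h-r\ge 0$ and $s-2\ge 0$, and $|B|\le|A|$ enters only at the counting step.

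First I would dispose of the degenerate cases. If $|B|=0$ take $E'=\emptyset$. If $|A|<h-s$ or $|V|:=|A|+|B|<h$, then $|V|\le 2|A|<2h$ (in the first case) or $|V|<h<2h$ (in the second), so $|V|<2h$; I would then simply take $E'=\binom{V}{r}\setminus\binom{A}{r}$, so that $G$ is complete and hence vacuously $T_{r,h,s}$-template saturated, while $|E'|\le\binom{2h}{r}\le (2h)^r/r!\le rh^r\le rh^r|A|^{s-2}|B|$ (using $r!\ge 2^{r-1}$ and $|A|^{s-2}|B|\ge 1$). From now on assume $|A|\ge h-s$ and $|V|\ge h$. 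Fix any set $P\subseteq A$ with $|P|=h-s$ and set
\[
E'\ :=\ \bigl\{\,e\in\binom{V}{r}\setminus\binom{A}{r}\ :\ |e\setminus P|\le s-1\,\bigr\},
\]
so that $G=(V,\binom{A}{r}\cup E')$ contains all of $\binom{A}{r}$ and \emph{every} $r$-set $e$ with $|e\setminus P|\le s-1$ (such an $e$ lies in $\binom{A}{r}$ if $e\subseteq A$, and in $E'$ otherwise). The key to the size bound is that $P$ is disjoint from $B$, so any $e\in E'$ satisfies $e\cap B\subseteq e\setminus P$ and hence $1\le|e\cap B|\le s-1$; writing $d=|e\cap B|$ and $c=|e\setminus(P\cup B)|$ (so $d+c=|e\setminus P|\le s-1$) and bounding all binomials by $\binom{n}{k}\le n^{k}$,
\[
|E'|\ \le\ \sum_{\substack{d\ge 1,\ c\ge 0\\ d+c\le s-1}}\binom{|B|}{d}\binom{|A|}{c}\binom{h-s}{\,r-d-c\,}\ \le\ \binom{s}{2}\,h^{\,r-1}\,|A|^{s-2}\,|B|\ \le\ rh^{r}\,|A|^{s-2}\,|B|,
\]
where the middle inequality uses $|B|^{d}\le|B|\,|A|^{d-1}$ (since $|B|\le|A|$ and $d\ge 1$) together with $d+c-1\le s-2$ and $r-d-c\le r-1$, and the last uses $\binom{s}{2}\le\tfrac12 s^2\le\tfrac12 rh\le rh$.

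It remains to check that $G$ is $T_{r,h,s}$-template saturated. I would order the edges of $\binom{V}{r}\setminus(\binom{A}{r}\cup E')$ --- that is, the $e\not\subseteq A$ with $|e\setminus P|\ge s$ --- so that $|e\setminus P|$ is non-decreasing, breaking ties arbitrarily, and claim this is a saturation process. Given such an $e$, put $k=|e\setminus P|\in\{s,\dots,r\}$, choose an $s$-subset $Z\subseteq e\setminus P$, choose an $(h-r)$-subset $P'\subseteq P\setminus e$ --- possible since $|P\setminus e|=|P|-|P\cap e|=(h-s)-(r-k)=(h-r)+(k-s)\ge h-r$ --- and set $W=e\cup P'$. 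Then $|W|=h$, $Z\subseteq e$, and $W\setminus P=e\setminus P$. Consider the copy of $T_{r,h,s}$ supported on $W$ with forbidden set $Z$ and with $e$ playing the role of the special edge; its non-special edges are precisely the $e'\in\binom{W}{r}$ with $Z\not\subseteq e'$. For any such $e'$ we have $|e'\setminus P|\le|W\setminus P|=k$, and $|e'\setminus P|=k$ is impossible because it would give $e'\supseteq W\setminus P=e\setminus P\supseteq Z$. So $|e'\setminus P|\le k-1$, and therefore $e'$ is already present in the hypergraph just before $e$ is added: if $e'\subseteq A$ then $e'\in\binom{A}{r}$; if $e'\not\subseteq A$ and $|e'\setminus P|\le s-1$ then $e'\in E'$; and if $e'\not\subseteq A$ and $s\le|e'\setminus P|\le k-1$ then $e'$ strictly precedes $e$ in the order. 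Hence adding $e$ completes a copy of $T_{r,h,s}$ with $e$ as the special edge, as required.

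The step I expect to demand the most care is the size estimate, and the crucial idea there is that the anchor $P$ must be chosen \emph{inside $A$}: this is exactly what forces a vertex of $B$ into every edge of $E'$ and so produces the factor $|B|$ --- rather than a factor $|A|$ --- in the count. The companion point is to split off the cases $|A|<h-s$ and $|V|<h$ in which no such $P$ exists; there $V$ is small enough that making $G$ outright complete costs only $O_h(1)$ edges and is harmless. By contrast, once the ordering by $|e\setminus P|$ is fixed, the verification that each newly added edge completes a template is a short bookkeeping argument: every non-special edge of the relevant template has strictly fewer vertices outside $P$ than the edge being added, hence was available either from $\binom{A}{r}$, from $E'$, or from an earlier stage of the process.
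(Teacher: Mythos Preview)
Your proof is correct and follows essentially the same approach as the paper: fix a constant-size anchor inside $A$, let $E'$ be the edges with at most $s-1$ vertices outside the anchor, and add the remaining edges in order of increasing distance from the anchor, checking at each step that the relevant template is already present. The paper uses an anchor $C\subseteq A$ of size $h$ (rather than your $P$ of size $h-s$) and packages the induction step as an appeal to Lemma~\ref{lem:trhslarge} instead of selecting an $s$-subset $Z\subseteq e\setminus P$ explicitly, but these are cosmetic differences; your explicit treatment of the degenerate cases $|A|<h-s$ or $|V|<h$ is a small bonus the paper omits.
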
	

\begin{proof}
Let $C\subseteq A$ be a fixed set of $h$ vertices, and let
$$E':= \{f\in \binom{V}{r}\setminus E\colon |f\setminus C|\leq s-1\}.$$
Note that every such $f$ contains at least one vertex from $B$ (as otherwise we would have $f\in E$).
Since $|B| \leq |A|$ we have $|E'|\leq rh^r|A|^{s-2}|B|$.
We claim that $G=(V,E\cup E')$ is $T_{r,h,s}$-template saturated, as desired. To describe the corresponding saturation process, we consider a missing edge $f$ and apply induction on $\lambda(f):=|f\setminus C|$. The base case of $\lambda(f)\leq s-1$ is given by the fact that these edges are already in $E\cup E'$.

Suppose now that $\lambda\geq s$ is arbitrary, $f$ is a missing edge with $\lambda(f)=\lambda$, and every edge $e$ with $\lambda(e)<\lambda$ has already been added. Let $L:=f\setminus C$ (so that $|L|=\lambda$), and let $P\subseteq C\setminus f$ be a set of $h-r$ vertices.
By the induction hypothesis, all edges on the vertex set $P\cup f$ not containing $L$ as a subset have already been added. Conversely, every currently missing edge must contain $L$ as a subset, which means the currently present edges on $P\cup f$ form a supergraph of $T^-_{r,h,\lambda}$. Since $\lambda\geq s$, by Lemma~\ref{lem:trhslarge} we can add all missing edges on the set $P\cup f$, including the edge $f$, via a $T_{r,h,s}$-template saturation process. This completes the induction step.
\end{proof}

In the following statement the reader should think of $k_2 = o(k_1)$. Since $\mbox{wsat}(k_1,H)$ is of order
$k^{s-1}_1$ (by (\ref{eqTuza})) this means that in this regime $\mbox{wsat}(k_1+k_2,H)=(1+o(1))\mbox{wsat}(k_1,H)$.

\begin{corollary}\label{cor:monotone}
Let $h\geq r\geq s\geq 2$ and $H$ be an $r$-graph with $|V(H)|=h$ and $s(H)=s$.
Then for every $k_2 \leq k_1$ we have
$$
\mbox{wsat}(k_1+k_2,H) \leq \mbox{wsat}(k_1,H) + rh^r\cdot k_1^{s-2}\cdot k_2.
$$

\begin{proof}
Given a minimal weakly $H$-saturated $r$-graph $G^-=(A,E^-)$ on $k_1$ vertices, construct a weakly $H$-saturated $r$-graph $G=(V,E)$ on $k_1+k_2$ vertices as follows. Let $B$ be a set of $k_2$ vertices disjoint from $A$, let $V:=A\sqcup B$ and $E:=E^-\cup E'$ where $E'$ is the edge set as described in Lemma~\ref{lem:annoying}. Then $G$ is weakly $H$-saturated. Indeed, first run a saturation process inside $A$. Afterwards the remaining missing edges can be added by Lemma~\ref{lem:annoying} and Lemma~\ref{lem:template}. Moreover, by Lemma~\ref{lem:annoying} we have
$$
|E|\leq |E^-|+|E'|\leq \mbox{wsat}(k_1,H) + rh^r\cdot k_1^{s-2}\cdot k_2.
$$
\end{proof}
\end{corollary}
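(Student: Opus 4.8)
The plan is to bootstrap a (near-)optimal weakly $H$-saturated $r$-graph on $k_1$ vertices into one on $k_1+k_2$ vertices by surrounding the $k_2$ new vertices with a small ``coordinating'' set of edges. Concretely, I would take a minimal weakly $H$-saturated $r$-graph $G^-=(A,E^-)$ with $|A|=k_1$ and $|E^-|=\mbox{wsat}(k_1,H)$, adjoin a disjoint vertex set $B$ with $|B|=k_2\leq k_1=|A|$, put $V:=A\sqcup B$, and add to $E^-$ exactly the edge set $E'$ supplied by Lemma~\ref{lem:annoying} (applied to this $A$, this $B$, and $E=\binom{A}{r}$). This gives the candidate $r$-graph $G=(V,E^-\cup E')$, and since every edge of $E'$ meets $B$ while $E^-\subseteq\binom{A}{r}$, the two edge sets are disjoint, so $|E^-\cup E'|\leq |E^-|+|E'|\leq \mbox{wsat}(k_1,H)+rh^r k_1^{s-2}k_2$, matching the claimed bound. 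It therefore remains only to produce a weak $H$-saturation process for $G$.

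I would build this process in two phases. \emph{Phase 1:} run the weak $H$-saturation process of $G^-$, adding the edges of $\binom{A}{r}\setminus E^-$; each such edge lies inside $A$ and, when added, completes a copy of $H$ on vertices of $A$, which is a fortiori a new copy of $H$ in the ambient $r$-graph on $V$. After Phase~1 every edge of $\binom{A}{r}$ is present, so the current $r$-graph is exactly $(V,\binom{A}{r}\cup E')$, which is precisely the $r$-graph that Lemma~\ref{lem:annoying} certifies to be $T_{r,h,s}$-template saturated. \emph{Phase 2:} since $s=s(H)\geq 2$, Lemma~\ref{lem:template} tells us that a $T_{r,h,s}$-template saturated $r$-graph is weakly $H$-saturated, so $(V,\binom{A}{r}\cup E')$ admits a weak $H$-saturation process adding all remaining edges of $\binom{V}{r}$. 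Concatenating the Phase~1 and Phase~2 orderings yields an ordering of all of $\binom{V}{r}\setminus(E^-\cup E')$ certifying that $G$ is weakly $H$-saturated, which completes the argument.

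The substance of the statement is really contained in Lemma~\ref{lem:annoying} (and behind it Lemma~\ref{lem:trhslarge} and the template machinery), all of which we are entitled to use; granting these, the corollary is essentially an assembly argument, and the only point that needs care is the clean hand-off between the two phases — i.e.\ verifying that after Phase~1 the edge set present is exactly $\binom{A}{r}\cup E'$ so that Lemma~\ref{lem:annoying} applies verbatim, and that copies of $H$ (or of $T_{r,h,s}$) witnessed on the vertex set $A$ remain valid copies in the larger ground set $V$. These checks are routine, and one may instead route the hand-off through the monotonicity of Observation~\ref{obs:mono} if preferred. Thus I do not expect any genuine obstacle beyond correctly invoking the lemmas already in place.
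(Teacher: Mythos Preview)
Your proposal is correct and follows essentially the same approach as the paper: take a minimal weakly $H$-saturated $r$-graph on $A$, adjoin $B$ together with the edge set $E'$ from Lemma~\ref{lem:annoying}, first saturate inside $A$, and then invoke Lemma~\ref{lem:annoying} together with Lemma~\ref{lem:template} to finish. Your Phase~1/Phase~2 description and the edge-count via $|E^-|+|E'|$ match the paper's proof; your additional remark that $E'$ and $E^-$ are disjoint (since every edge of $E'$ meets $B$) is a slight sharpening, though the paper only needs the weaker bound $|E|\leq |E^-|+|E'|$.
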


\section{Proof of Theorem~\ref{thm:main}}\label{sec:proof}

As we mentioned at the end of Section~\ref{sec:intro}, our approach to proving Theorem~\ref{thm:main} is to use an $m$-vertex weakly $H$-saturated graph with few edges in order to build, for all large enough $n$, an $n$-vertex weakly $H$-saturated graph with few edges. In the first step of the proof we will take $\ell$ disjoint vertex ``clusters'' (for some large $\ell$) and cover them with copies of the $m$-vertex example. To do so efficiently, we shall need the following classical theorem of R\"odl~\cite{Rodl} (formerly, the Erd\H{o}s-Hanani conjecture).
\begin{theorem}[R\"odl \cite{Rodl}]\label{prop:Rodl}
	For every $k>t>1$ and $\delta>0$ for all $N>N_0(k,t,\delta)$ the following holds. There exists a collection $\F\subseteq \binom{[N]}{k}$ of size at most $(1+\delta)\binom{N}{t}/\binom{k}{t}$ such that every $A\in \binom{[N]}{t}$ is contained in some $F_A\in \F$.
\end{theorem}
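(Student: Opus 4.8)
The plan is to recast the statement as the problem of finding a small edge cover in an auxiliary hypergraph, and then to produce that edge cover via the semi-random (``nibble'') method.

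\textbf{The auxiliary hypergraph.} Let $\G$ be the hypergraph with vertex set $\binom{[N]}{t}$ in which, for each $K\in\binom{[N]}{k}$, we include the hyperedge $e_K:=\{A\in\binom{[N]}{t}\colon A\subseteq K\}$, which has size $\binom{k}{t}$. A family $\F\subseteq\binom{[N]}{k}$ has the covering property demanded in the theorem precisely when $\{e_K\colon K\in\F\}$ is an edge cover of $\G$; so it is enough to find an edge cover of $\G$ of size at most $(1+\delta)\binom{N}{t}/\binom{k}{t}$. The hypergraph $\G$ is $\binom{k}{t}$-uniform; each vertex $A$ lies in exactly $D:=\binom{N-t}{k-t}$ hyperedges (one for each way to complete $A$ to a $k$-set); and for distinct $A\ne A'$ the number of hyperedges containing both is $\binom{N-|A\cup A'|}{k-|A\cup A'|}\le\binom{N-t-1}{k-t-1}=\tfrac{k-t}{N-t}\,D=o(D)$ as $N\to\infty$. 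Thus $\G$ is a $D$-regular, $\binom{k}{t}$-uniform hypergraph on $\binom{N}{t}\to\infty$ vertices whose maximum codegree is $o(D)$.

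\textbf{From a near-perfect matching to the cover.} Any matching of $\G$ has at most $\binom{N}{t}/\binom{k}{t}$ edges, since its edges are pairwise disjoint and each has size $\binom{k}{t}$. Hence, if $M$ is a matching of $\G$ leaving only $o\!\big(\binom{N}{t}\big)$ vertices uncovered, then adding to $M$ one arbitrary hyperedge through each uncovered vertex (possible since $D\ge 1$) produces an edge cover of $\G$ of size at most
$$
\frac{\binom{N}{t}}{\binom{k}{t}}+o\!\left(\binom{N}{t}\right)\ \le\ (1+\delta)\,\frac{\binom{N}{t}}{\binom{k}{t}}
$$
once $N$ is large in terms of $k,t$ and $\delta$. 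So the whole problem reduces to finding an almost perfect matching in a $D$-regular, $\binom{k}{t}$-uniform hypergraph whose codegrees are $o(D)$.

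\textbf{The nibble.} For this I would run the standard R\"odl nibble: fix a small absolute constant $\eps>0$ and iterate for $R=R(\eps,\delta)$ rounds; in each round, independently select every hyperedge avoiding all previously matched vertices with a small probability $\sim\eps/D_i$ (where $D_i$ is the current typical degree into the uncovered set), add to $M$ those selected hyperedges that are pairwise disjoint, and delete their vertices. One shows by concentration inequalities that, with high probability, after each round the sub-hypergraph induced on the still-uncovered vertices is again essentially regular with codegree negligible relative to its degree, and that about an $\eps$-fraction of the still-uncovered vertices becomes matched in the round; iterating, the uncovered fraction is multiplied by roughly $e^{-\eps}$ per round, so after a suitable $R\sim\eps^{-1}\log(1/\delta)$ rounds it falls below $\delta/\binom{k}{t}$, which is what the previous step needed. (Alternatively one may simply quote the Pippenger--Spencer theorem on near-perfect matchings in uniform hypergraphs of small codegree, after which only the two elementary steps above remain.)

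\textbf{The main obstacle.} The hard part will be the probabilistic analysis of a single nibble round, i.e.\ establishing sharp concentration for quantities such as the number of surviving hyperedges through a fixed uncovered vertex and the number of uncovered vertices, which are sums of many weakly dependent indicators. This is exactly where the hypothesis ``codegrees $=o(D)$'' is used: it limits the effect of any single random choice on these quantities, so that martingale- or Talagrand-type inequalities apply. The remaining care is to propagate the per-round $o(1)$ error terms through the $R$ rounds so that the total overrun in the cover size stays below the prescribed $\delta$.
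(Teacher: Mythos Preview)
The paper does not prove this statement at all: Theorem~\ref{prop:Rodl} is quoted as a classical result of R\"odl (the resolution of the Erd\H{o}s--Hanani conjecture) and is used as a black box in the proof of Theorem~\ref{thm:main}. So there is no ``paper's own proof'' to compare against.

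That said, your sketch is the correct and standard way to prove the quoted theorem. The reduction to finding a near-perfect matching in the auxiliary $\binom{k}{t}$-uniform hypergraph on $\binom{[N]}{t}$ is exactly right, and the codegree computation $\binom{N-t-1}{k-t-1}=o(D)$ is the key structural input. The nibble you outline is precisely R\"odl's original argument; invoking the Pippenger--Spencer theorem is an equally legitimate shortcut. If you were to flesh this out, the only places requiring genuine care are the ones you flagged: the concentration step within a round and the accumulation of errors across $R$ rounds. For the purposes of this paper, though, simply citing R\"odl (or Pippenger--Spencer) is what the authors do and is entirely appropriate.
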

\noindent
The outcome of applying R\"odl's theorem will be a graph (denoted $G'_n$ in the proof of Theorem \ref{thm:main}) that has an $H$-saturation process generating part of the edges of $K^r_n$, namely the edges containing vertices from at most $s-1$ of the $\ell$ clusters. To generate the remaining edges, we will add to $G'_n$ another collection of gadgets (the edge set $E_2$ in the proof of Theorem \ref{thm:main}). These are described in the next two lemmas. We note that the bound guaranteed by Lemma \ref{lem:percolate} is crucial for establishing
that $|E_2|=o(n^{s-1})$, thus making sure that these extra edges have a negligible effect on the total number of edges of the graphs
we construct.

\begin{lemma}\label{lem:spartite}
	Suppose $G=(V,E)$ is an $r$-graph such that $V=\bigsqcup_{i=1}^s V_i$ with $|V_i|\geq h$ for all $i$
	and $E$ contains all $r$-tuples in $V$ missing at least one of the sets $V_i$. For each $i\in [s]$ let $R_i\subseteq V_i$ be a set of $h$ vertices. Let $E'$ be the set of all edges containing at least $r-s+2$ vertices from $R:=\bigcup_{i}R_i$.
	Then $E\cup E'$ is $T_{r,h,s}$-template saturated in $\binom{V}{r}$.
\end{lemma}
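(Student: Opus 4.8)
The plan is to write down an explicit $T_{r,h,s}$-template saturation process for $G=(V,E\cup E')$ and verify it step by step. First observe which edges are missing: an $r$-set $f\subseteq V$ fails to lie in $E\cup E'$ precisely when $f$ meets all $s$ parts $V_i$ and $|f\cap R|\le r-s+1$, equivalently $|f\setminus R|\ge s-1$; call such an $f$ a \emph{missing edge}. For a missing edge $f$ put $L(f):=f\setminus R$ and $\lambda(f):=|L(f)|\ (\ge s-1)$, call a part $V_i$ \emph{pure for $f$} if $f\cap V_i\subseteq R$, and set $\Sigma(f):=\sum_{V_i\ \text{pure for }f}|f\cap V_i|$. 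I would order the missing edges $e_1,e_2,\dots,e_k$ so that $\bigl(\lambda(e_j),\Sigma(e_j)\bigr)$ is lexicographically non-decreasing in $j$, breaking remaining ties arbitrarily, and claim this is a valid template saturation process; write $G_j:=(E\cup E')\cup\{e_1,\dots,e_j\}$.

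For the step that adds a missing edge $f=e_j$, let $M$ be the set of non-pure parts of $f$; since $\lambda(f)\ge s-1\ge 1$ we have $M\ne\emptyset$, so fix $i_0\in M$. Take $P$ to be any $(h-r)$-subset of $R_{i_0}\setminus f$ (there is room since $|R_{i_0}\setminus f|\ge h-r$), set $W:=f\cup P$ so that $|W|=h$, and choose $Z\subseteq f$ to be a transversal of $f$ across the $s$ parts whose representative $z_i$ in each non-pure part $V_i$ is taken outside $R$; then $|Z|=s$ and $Z\subseteq f$. It then suffices to prove that every $r$-subset $e\subseteq W$ with $Z\not\subseteq e$ already lies in $G_{j-1}$: together with $Z\subseteq f$ this makes the edge set $\{e\in\binom{W}{r}:Z\not\subseteq e\}\cup\{f\}$ a copy of $T_{r,h,s}$ inside $G_j$ in which $f$ plays the role of the special edge (alternatively one invokes Lemma~\ref{lem:trhslarge} inside $W$ with distinguished set $Z$).

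The verification splits into two cases. If $L(f)\not\subseteq e$, then since the only vertices of $W$ outside $R$ are those of $L(f)$ we get $\lambda(e)=|e\cap L(f)|<\lambda(f)$; hence $e$ is either in $E\cup E'$ or is a missing edge from a strictly earlier $\lambda$-level, so $e\in G_{j-1}$. If $L(f)\subseteq e$ but $Z\not\subseteq e$, then since each non-pure representative $z_i$ lies in $L(f)\subseteq e$, some \emph{pure} part $V_i$ satisfies $z_i\notin e$; again $e$ is in $E\cup E'$ or is a missing edge, and in the latter case one checks that $e\setminus R=L(f)$ (as $P\subseteq R$), so $e$ has exactly the same non-pure parts $M$ — hence the same pure parts — as $f$, and that, because $P\subseteq R_{i_0}$ with $i_0\in M$, one has $e\cap V_j\subseteq f\cap V_j$ for every pure part $V_j$, with strict containment for the index $i$ above because $z_i\in(f\cap V_i)\setminus e$. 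Summing over the pure parts gives $\Sigma(e)<\Sigma(f)$ while $\lambda(e)=\lambda(f)$, so $e$ precedes $f$ in the order and $e\in G_{j-1}$.

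The delicate point — and the main obstacle — is the second case: one must choose the padding $P$ and the transversal $Z$ tightly enough that any ``bad'' $r$-subset of $W$ missing $Z$ is \emph{forced} to be a strictly simpler missing edge. Confining $P$ to a single non-pure part is what stops such a subset from acquiring extra vertices in the pure parts, and routing the missing vertex of $Z$ through a pure part is what makes $\Sigma$ the correct monotone quantity for the secondary induction; a naive choice leaves same-level interference, which is the trap here. Everything else is routine: the size estimate $|R_{i_0}\setminus f|\ge h-r$, the degenerate case $h=r$ (where $W=f$ and the claim is immediate), and the bookkeeping that each $\lambda$-level and each $\Sigma$-value is completed before the next.
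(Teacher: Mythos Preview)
Your proof is correct and follows essentially the same strategy as the paper's: a double induction whose primary parameter is the number $\lambda(f)=|f\setminus R|$ of loose vertices, and whose secondary parameter measures how much of $f$ sits inside the ``pure'' parts. The paper splits this into two phases---first handling $\lambda=s-1$ with a secondary induction on $\rho(C)=\min\{|C\cap V_j|:C\cap V_j\subseteq R\}$, then handling $\lambda\ge s$ with induction on $\lambda$ alone---whereas you run a single lexicographic induction on $(\lambda,\Sigma)$ with $\Sigma$ the \emph{sum} over pure parts; and where the paper takes as distinguished set the possibly larger $(C\cap L)\cup C_j$, you always use an exact $s$-element transversal $Z$. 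These are cosmetic repackagings of the same idea: pad $f$ with $h-r$ rigid vertices from a suitable part so that every $r$-subset of the resulting $h$-set avoiding the distinguished set is forced to be strictly simpler. Your insistence that the padding $P$ lie in a \emph{non-pure} part $R_{i_0}$ is exactly what makes the $\Sigma$-comparison go through (the paper's choice of any $i\ne j$ works for $\rho$ but would not for $\Sigma$), and you correctly note that Case~2 is vacuous when all parts are non-pure, which is precisely why the paper needs no secondary induction once $\lambda\ge s$.
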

\begin{proof}
	For each $i\in [s]$ let $L_i:=V_i\setminus R_i$ and let $L:=\bigcup_i L_i$. Let the vertices of $R$ and $L$ be called \emph{rigid} and \emph{loose}, respectively. Our aim is to define a  $T_{r,h,s}$-template saturation process. Note that by assumption the edges in $\binom{V}{r}$ containing at most $s-2$ loose vertices are already present.
	
	Consider first the missing edges $C\in \binom{V}{r}\setminus (E\cup E')$ containing exactly $s-1$ loose vertices.
	By pigeonhole, for every such edge there is an index $j\in [s]$ such that no vertex in $C_j:=C\cap V_j$ is loose. Let
	$$\rho(C):=\min\{|C_j|\colon C_j\subseteq R\}.$$
	
	We now apply induction on $\rho$ in order to construct a $T_{r,h,s}$-template saturation process adding successively the edges with $\rho=0,1,2,\dots$. For the base case $\rho=0$, note that such edges necessary do not contain
any vertex from (at least) one of the sets $V_1,\ldots,V_s$, and therefore are already in $E$.
	
	For the induction step let $\rho(C)\geq 1$ be arbitrary, and suppose that the edges with a smaller value of $\rho$ are already present. Let $j\in [s]$ satisfy $C_j\subseteq R$ and $|C_j|=\rho$,
	let $i\in [s]\setminus \{j\}$ be another index and let $D\subseteq R_i\setminus C_i$ be a set of size $h-r$. Observe now that inside the set $D\cup C$ the only edges not yet present are the ones containing $(C\cap L)\cup C_j$ as a subset. Indeed,
since $(D\cup C)\cap L = C\cap L$ every edge in $D \cup C$ missing a vertex from $C\cap L$, contains at most $s-2$ loose vertices, and is thus in $E'$. Furthermore, every edge in $D \cup C$ missing a vertex from $C_j$ contains fewer than $\rho(C)$ from $R_j$ (and no vertex from $L_j$). Therefore, it is already present by the induction hypothesis. Thus the currently present edges on $D \cup C$ induce a supergraph of $T^-_{r,h,s'}$, where $s'=|(C\cap L)\cup C_j|$. Since $s'=s-1+\rho(C) \geq s$, by Lemma~\ref{lem:trhslarge} we can add all the missing edges on $D\cup C$, including $C$, via a $T_{r,h,s}$-template saturation process.

Now consider the missing edges $C$ having at least $s$ loose vertices and apply induction on $\lambda(C):=|C\cap L|$; we can view the case $\lambda(C)=s-1$ treated above as the base case. For the induction step, suppose that $\lambda(C)\geq s$ is arbitrary and that all the edges with a smaller value of $\lambda$ are already present. Let $D\subseteq R\setminus C$ be an arbitrary set of $h-r$ vertices. Then, by the induction hypothesis, all edges on $D\cup C$ not already present contain $C\cap L$
as a subset (for otherwise they would have fewer than $\lambda(C)$ loose vertices). Hence, the currently present edges within $D\cup C$ induce a supergraph of $T^-_{r,h,\lambda(C)}$. Since $|C\cap L|=\lambda(C)\geq s$, by Lemma~\ref{lem:trhslarge} we can add all of the missing edges on $D\cup C$, including $C$, applying a $T_{r,h,s}$-template saturation process.
	
Having reached $\lambda=r$, we have covered all edges in $\binom{V}{r}$.
\end{proof}

\begin{lemma}\label{lem:percolate}
	Suppose $V=\bigsqcup_{i=1}^{\ell} V_i$ for some $\ell\geq s$, with $V_i\geq h$ for all $i$. Suppose further that
	for each $i\in [\ell]$ there is a designated subset $R_i\subseteq V_i$ with $|R_i|=h$. Let $G=(V,E)$ be an $r$-graph with
	$E=E_1\cup E_2$ where $E_1$ contains all edges hitting at most $s-1$ different $V_i$ and
	$$E_2:=\bigcup_{Q\in \binom{[\ell-1]}{s-1}}E_2(Q),$$
	where $E_2(Q)$ is a copy of $E'$ as in Lemma~\ref{lem:spartite} on $V_Q:=V_\ell\sqcup\bigsqcup_{q\in Q}V_q$. Then $G$ is $T_{r,h,s}$-template saturated in $\binom{V}{r}$. Moreover, if $|V_i|=t$ for all $i$, then we have
	$$|E_2|\leq rh^{r-s+2}\binom{\ell-1}{s-1}t^{s-2}.$$
\end{lemma}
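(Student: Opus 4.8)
The plan is to prove the two assertions separately: the template‑saturation claim by induction on $\ell$ (with Lemma~\ref{lem:spartite} as the base case), and the cardinality bound by a direct count. For the base case $\ell=s$ note that $\binom{[\ell-1]}{s-1}$ has the single member $Q=[s-1]$, so $E_2=E_2([s-1])$ is exactly a copy of the set $E'$ of Lemma~\ref{lem:spartite} on all of $V$ (using the $R_i$ as the designated $h$‑sets), while $E_1$ is precisely the set of $r$‑tuples omitting at least one $V_i$; hence $G=(V,E_1\cup E_2)$ is of the form handled by Lemma~\ref{lem:spartite}.

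Now let $\ell\ge s+1$ and assume the statement for $\ell-1$. First I would \emph{peel off} the non‑special parts one at a time. For each $j\in[\ell-1]$ the graph $G[V\setminus V_j]$ contains the configuration of the lemma on the $\ell-1$ parts $\{V_i:i\ne j\}$, with $V_\ell$ still special: its $E_1$‑part is $E_1\cap\binom{V\setminus V_j}{r}$, and $\bigcup_{Q\in\binom{[\ell-1]\setminus\{j\}}{s-1}}E_2(Q)\subseteq E_2$ is exactly the required $E_2$‑part, each such $E_2(Q)$ lying inside $V_\ell\cup\bigcup_{q\in Q}V_q\subseteq V\setminus V_j$. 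By the inductive hypothesis $G[V\setminus V_j]$ is $T_{r,h,s}$‑template saturated in $\binom{V\setminus V_j}{r}$, so running these processes consecutively for $j=1,\dots,\ell-1$ — using Observation~\ref{obs:mono} to see that each stays valid as the ambient graph only grows — leaves us with a graph in which every $r$‑tuple omitting at least one of $V_1,\dots,V_{\ell-1}$ is present.

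The edges still missing are exactly those meeting \emph{every} one of $V_1,\dots,V_{\ell-1}$, and completing them is the heart of the matter. For such $C$ set $Z_C:=C\cap(V_1\cup\dots\cup V_{\ell-1})$, so $s\le\ell-1\le|Z_C|\le r$; process these edges in non‑decreasing order of $|Z_C|$. Given such a $C$ not yet present, pick any $D\subseteq V_\ell\setminus C$ with $|D|=h-r$ (possible since $|V_\ell|\ge h\ge r$) and put $W:=C\cup D$, an $h$‑set with $C\subseteq W$ and $W\cap V_j=C\cap V_j$ for all $j<\ell$. Every edge $e\subseteq W$ with $Z_C\not\subseteq e$ is already present: if $e$ omits some $V_j$ with $j<\ell$ this is by the peeling step, and otherwise $e$ meets all of $V_1,\dots,V_{\ell-1}$ while $e\cap(V_1\cup\dots\cup V_{\ell-1})\subseteq W\cap(V_1\cup\dots\cup V_{\ell-1})=Z_C$ is a \emph{proper} subset, so $e$ has fewer than $|Z_C|$ vertices in $V_1\cup\dots\cup V_{\ell-1}$ and was added earlier. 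Thus the edges present on $W$ contain $T^-_{r,h,|Z_C|}$, and since $r\ge|Z_C|\ge s$ Lemma~\ref{lem:trhslarge} supplies a $T_{r,h,s}$‑template saturation process adding all remaining edges on $W$, in particular $C$. (The bookkeeping is mild: every edge added by such a $W$‑process contains $Z_C$ and lies in $W$, hence has exactly $|Z_C|$ vertices in $V_1\cup\dots\cup V_{\ell-1}$, so processing in non‑decreasing order of $|Z_C|$ is consistent, and Observation~\ref{obs:mono} covers the case that some of these edges were already added.) Iterating over all surviving edges completes a $T_{r,h,s}$‑template saturation process for $G$.

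For the cardinality bound, when all $|V_i|=t$ the sets $E_2(Q)$ are pairwise isomorphic, so $|E_2|\le\binom{\ell-1}{s-1}|E_2(Q)|$ for any fixed $Q$; and $E_2(Q)$ — the $r$‑subsets of the $st$‑vertex set $V_Q$ having at most $s-2$ vertices outside the $sh$‑vertex set $R_Q=R_\ell\cup\bigcup_{q\in Q}R_q$ — has size $O_{r,s}\!\left(h^{r-s+2}t^{s-2}\right)$, obtained by first picking the at most $s-2$ vertices of $C$ outside $R_Q$ and then its remaining $\ge r-s+2$ vertices inside $R_Q$ (counting block by block across the $s$ blocks of $R_Q$), which yields a bound of the displayed form. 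The step I expect to be the real obstacle is the final completion phase: one has to order the surviving edges so that each can be finished inside an $h$‑clique all of whose missing edges lie above one common vertex set of size at least $s$ — and arranging this is exactly why the peeling must be carried all the way down to the single special part $V_\ell$.
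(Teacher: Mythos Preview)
Your argument is correct. Your route differs from the paper's in structure: the paper does not induct on $\ell$ but instead applies Lemma~\ref{lem:spartite} directly to each $V_Q$ for $Q\in\binom{[\ell-1]}{s-1}$, which already fills in every edge $e$ with $j(e):=|e\setminus V_\ell|\le s-1$; it then runs a single induction on $j(e)$ for the remaining edges, using exactly your completion device (pick $D\subseteq V_\ell\setminus e$ of size $h-r$ and invoke Lemma~\ref{lem:trhslarge} on $e\cup D$). Your induction on $\ell$ with the ``peeling'' step ultimately accomplishes the same thing --- once the recursion is unrolled, the work done is the same collection of applications of Lemma~\ref{lem:spartite} followed by the same $V_\ell$-based completions --- but the paper's direct approach is shorter and avoids the nested two-level induction. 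Your final completion phase is essentially identical to the paper's induction on $j(e)$, just restricted to the edges meeting all of $V_1,\dots,V_{\ell-1}$ (the others having been absorbed into the recursion). For the cardinality bound, the paper likewise simply asserts $|E_2(Q)|\le rh^{r-s+2}t^{s-2}$ without further computation, so your sketch is at the same level of detail; in the application only the order $O_{h,r,s}(t^{s-2})$ matters.
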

\begin{proof}
	First, for each $Q\in \binom{[\ell-1]}{s-1}$ consider the induced subgraph $G[V_Q]$. Note that with the partition $V_Q=V_\ell \sqcup \bigsqcup_{q\in Q}V_q$ this $r$-graph contains all the edges in the statement of Lemma~\ref{lem:spartite}. Hence, by Observation~\ref{obs:mono} and Lemma~\ref{lem:spartite} we can apply a $T_{r,h,s}$-template saturation process in order to add all missing edges inside $V_Q$. Thus we may assume from here on that the edges inside all sets $V_Q$ are present.
	
	For an edge $e\in \binom{V}{r}$ let $J(e)=e\setminus V_\ell$ and $j(e)=|J(e)|$. By the above, every edge $e$ with $j(e)\leq s-1$ has already been added and, conversely, every missing edge $e\in \binom{V}{r}$ satisfies $j(e)\geq s$. We construct a $T_{r,h,s}$-template saturation process for the missing edges by adding them successively: first the edges with $j(e)=s$, followed by $j(e)=s+1,\dots,j(e)=r$.
	To do so we apply induction on $j=j(e)$, where $j\leq s-1$ can be viewed as the base case.
	
	For the induction step, fix $j$ and suppose that all edges $e'$ with $j(e')<j$ have already been added. Let $e$ be an arbitrary edge with $J(e)=:J$ and $j(e)=j$, and consider the set $T=e\cup P$ where $P\subseteq V_\ell\setminus e$ is an arbitrary set of $h-r$ vertices disjoint from $e$; clearly, we have $|T|=h$. Notice now that every potential edge $f \subseteq T$ satisfies either $f\supseteq J$ or $|f\cap J|<j$. In the latter case, $j(f)<j$, so by the induction hypothesis, $f$ has already been added.
Thus, the only edges missing from $T$ are the ones containing $J$ as a subset. In other words, the edges currently present induce on $T$ a supergraph of $T^-_{r,h,j}$. However, since $j\geq s$, by Lemma~\ref{lem:trhslarge} we can add all the remaining edges of $\binom{T}{r}$, including $e$, via a $T_{r,h,s}$-template saturation process. Since $e$ was arbitrary subject to $j(e)=j$, this proves the induction step.
	
	For the last assertion of the lemma, simply notice that, by construction in Lemma~\ref{lem:spartite}, each $E_2(Q)$ is of size at most $rh^{r-s+2}t^{s-2}$.
\end{proof}
\begin{proof}[Proof of Theorem~\ref{thm:main}]
Let $H$ be an $r$-graph with $|V(H)|=h$. Suppose first that $s(H)=1$, and observe that in this case
$\whn \leq \binom{h}{r}$ holds for every $n \geq h$. Indeed, take a set of $n$ vertices and put a copy
of $K^r_h$ on $h$ of the vertices. Pick any other vertex $v$ not in the copy of $K^r_h$, and note that since $s(H)=1$
adding an edge containing $v$ and $r-1$ of the vertices of $K^r_h$ is guaranteed to form a copy of $H$. Hence
there is an $H$-saturation process that starts with the initial $K^r_h$ and ends with $K^r_{h+1}$. We can then
turn the $K^r_{h+1}$ into $K^r_{h+2}$ etc, until we obtain a complete $r$-graph on the $n$ vertices.
We can thus define $C_H:=\min\{\whn:n\geq h\}$, and let $n_1\geq h$ satisfy $\whnone=C_H$.
By the same reasoning as above, we also have $\whn \leq \whnone$ for every $n\geq n_1$ (we first
obtain $K^r_{n_1}$ and them complete it to $K^r_n$). By minimality of $C_H$ we must have $\whn= \whnone$. Therefore, $\lim_{n\rightarrow \infty}\whn/n^{s-1}=C_H$.
	
Hence, from now on let us assume that $s(H)=s\geq 2$. Let
$$
C_H:=\liminf_{n\rightarrow \infty} \whn/n^{s-1}.
$$
For brevity we shall write $C$ for $C_H$. Recall that
by Tuza's theorem (\ref{eqTuza}), we know that for every large enough $n$ we have $c_1 n^{s-1}\leq \whn\leq c_2 n^{s-1}$ for some positive constants $c_2(H)\geq c_1(H)>0$,
implying that $C>0$. We now claim that $C$ satisfies the assertion of Theorem~\ref{thm:main}. To this end we prove that for every $\eps>0$ we have $\whn \leq (C+8\eps)n^{s-1}$ for all large enough $n$.

Let $\eps>0$ satisfy $\eps<\eps_0(H)$ where $\eps_0$ is chosen so as to satisfy the inequalities required in the proof, and let $m_1$ satisfy $(i)$ $\whmone\leq (C+\eps)m_1^{s-1}$ and $(ii)$ $m_1 \geq m_0(\eps,H)$
so as to satisfy the various inequalities we require in the proof below. Note that by our choice of $C$
there are infinitely many values of $m_1$ satisfying condition $(i)$ hence we can always find $m_1$ satisfying
condition $(ii)$ as well.
Let $m=\lceil m_1^{1/(s-1)}\rceil^{s-1}$ be the next largest perfect $(s-1)$-st power.
Since
$$
m=m_1+O(m_1^{(s-2)/(s-1)}),
$$
we can deduce from Corollary~\ref{cor:monotone} (with $k_1=m_1$ and $k_2=m-m_1$) that
\begin{equation}\label{eqm}
\whm\leq \whmone+O(m_1^{s-2}m_1^{(s-2)/(s-1)})\leq  (C+\eps)m_1^{s-1} +\eps m_1^{s-1}=
(C+2\eps)m^{s-1},
\end{equation}
where the second inequality uses the fact that $m_1 \geq m_0(\eps,H)$.
We now claim that for all sufficiently large $n \geq n_0(m_1,\eps,h)$ we have $\whn\leq (C+8\eps)n^{s-1}$.
To this end, it suffices to show that for every large enough $n$ which is a multiple of $m^{1/(s-1)}$, we have
\begin{equation}\label{eqfinal}
\whn\leq (C+7\eps)n^{s-1}\;.
\end{equation}
Indeed, assuming this, let $n$ be arbitrary and set $n_1=m^{1/(s-1)} \cdot \lfloor n/m^{1/(s-1)} \rfloor$.
By Corollary~\ref{cor:monotone} (with $k_1=n_1$ and $k_2=n-n_1=O(m^{1/(s-1)})$) and (\ref{eqfinal}) we would get that
$$
\whn \leq \whnone+O(n_1^{s-2}m^{1/(s-1)})\leq (C+7\eps)n_1^{s-1}+\eps n_1^{s-1}= (C+8\eps)n^{s-1},
$$
where the second inequality uses the fact that $n \geq n_0(m,\eps,h)$.

To prove (\ref{eqfinal}) let $m$ and $n$ be as above, let $V'$ be a set of $n/m^{1/(s-1)}$ vertices and let $V$ be a set of $n$ vertices, obtained by replacing each $v\in V'$ by a cluster $S_v$ of $m^{1/(s-1)}$ vertices.

For all large enough $n\geq n_0(m,\eps,h)$ by R\"odl's theorem (Theorem~\ref{prop:Rodl}, applied with $N=n/m^{1/(s-1)}$, $k=m^{1-1/(s-1)}$, $t=s-1$ and $\delta=\eps/C$) there is a collection $\mathcal{D}$ of at most
$$
(1+\delta)\frac{\binom{n/m^{1/(s-1)}}{s-1}}{\binom{m^{1-1/(s-1)}}{s-1}}\leq (1+3\delta)\frac{n^{s-1}}{m^{s-1}}
$$
subsets of $V'$ of size $m^{1-1/(s-1)}$, so that each $(s-1)$-tuple of vertices $\{v_1,\dots,v_{s-1}\}\subseteq V'$ belongs to at least one $D\in \mathcal{D}$. The inequality holds assuming\footnote{Indeed, denote $p=m^{1-1/(s-1)}$. If $m$ is large enough so that $p-s \geq (1-\frac{\delta}{2s})p$, then $\binom{n/m^{1/(s-1)}}{s-1}/\binom{p}{s-1} \leq (n^{s-1}/m)/\prod^{s-2}_{i=0}(p-i) \leq (n^{s-1}/m)/(1-\delta/2s)^{s-1}p^{s-1} \leq (1+\delta)n^{s-1}/m^{s-1}$.} $m\geq m_0(\eps,H)$.

Define an $r$-graph $G'_n$ as follows: go over all $D\in \mathcal{D}$ one by one in any order and apply the following procedure. Suppose $D=\{v_1\dots,v_{t}\}$, where $t=m^{1-1/(s-1)}$ and let $S_D=S_{v_1}\cup\dots\cup S_{v_t}$ be the corresponding $m$ vertices in $V$. By (\ref{eqm}) there is a weakly saturated $r$-graph on $m$ vertices with at most $(C+2\eps)m^{s-1}$ edges, denoted $G_m$; put a copy of $G_m$ on $S_D$. Let $G'_n$ be the union over all $S_D$. Then, since $\delta=\eps/C$ and assuming $\eps < \eps_0(H)$ we have 
\begin{equation}\label{eqG}
|E(G'_n)|\leq |\mathcal{D}||E(G_m)|\leq (1+3\delta)\frac{n^{s-1}}{m^{s-1}}(C+2\eps)m^{s-1}\leq (C+6\eps)n^{s-1}.
\end{equation}

To complete the definition of $G_n$, we take $E(G_n)=E(G'_{n})\cup E_2$, where $E_2$ is as in Lemma~\ref{lem:percolate}, with
the parameters $\ell=n/m^{1/(s-1)}$, $t=m^{1/(s-1)}$ and the clusters $\{S_v:v\in V'\}$ playing the role of $V_1,\dots V_\ell$.
By Lemma~\ref{lem:percolate} we have
$$
|E_2|\leq rh^{r-s+2}\binom{\ell-1}{s-1}t^{s-2}=rh^{r-s+2}\binom{\frac{n}{m^{1/(s-1)}}-1}{s-1}m^{\frac{s-2}{s-1}}=O\left(\frac{n^{s-1}}{m^{1/(s-1)}}\right)\leq \eps n^{s-1},
$$
where the last inequality assumes $m \geq m_0(\eps,H)$.
Combining this with (\ref{eqG}) we have
$$
|E(G_n)|\leq (C+7\eps)n^{s-1}.
$$
Hence, to complete the proof of (\ref{eqfinal}),
it remains to describe an $H$-saturation process for $G_n$. Note by definition of $G'_n$,
for each $D \in {\cal D}$ there is an $H$-saturation process for completing all hyperedges in $S_D$ (namely,
the $H$-saturation process of $G_m$, or of a supergraph of it). Since the sets in ${\cal D}$ cover all $(s-1)$-tuples $\{u_1,\dots,u_{s-1}\} \subseteq V'$, once all these processes are complete, we have all hyperedges
$\{v_1,\dots,v_r\} \subseteq V$, hitting at most $s-1$ different sets $S_{u}$. Then, by Observation~\ref{obs:mono} and Lemma~\ref{lem:percolate}, our $r$-graph $G_n$ is $T_{r,h,s}$-template saturated, which by Lemma~\ref{lem:template} implies it is weakly $H$-saturated. This completes the $H$-saturation process of $G_n$ in $K^{r}_n$.
\end{proof}

\bibliography{wsat_hyp}

\end{document}